\documentclass[12pt]{amsart}
\oddsidemargin=8pt
\evensidemargin=8pt
\textheight=582pt
\textwidth=450pt

\usepackage{fancyhdr}
\usepackage{stmaryrd}
\usepackage{calrsfs}

\pagestyle{fancy}
\fancyhf{}

\fancyhead[CE]{\fontsize{9}{11}\selectfont A. LIN, H. RABINOVITZ, AND Q. ZHANG}
\fancyhead[CO]{\fontsize{9}{11}\selectfont THE FURSTENBERG PROPERTY IN PUISEUX MONOIDS}
\fancyhead[LE,RO]{\thepage}


\usepackage{amsmath}
\usepackage[nospace,noadjust]{cite}
\usepackage{amsfonts}
\usepackage{amssymb,enumerate}
\usepackage{amsthm}
\usepackage{cite}
\usepackage{comment}
\usepackage{color}
\usepackage[all]{xy}
\usepackage{hyperref}
\usepackage{lineno}
\usepackage{tikz-cd}


\newtheorem*{maintheorem*}{Main Theorem}
\newtheorem{theorem}{Theorem}[section]
\newtheorem{prop}[theorem]{Proposition}

\newtheorem{lemma}[theorem]{Lemma}
\newtheorem{cor}[theorem]{Corollary}

\theoremstyle{definition}

\newtheorem{remark}[theorem]{Remark}
\newtheorem{example}[theorem]{Example}
\numberwithin{equation}{section}


\newcommand{\nn}{\mathbb{N}}
\newcommand{\pp}{\mathbb{P}}
\newcommand{\qq}{\mathbb{Q}}
\newcommand{\rr}{\mathbb{R}}
\newcommand{\zz}{\mathbb{Z}}

\newcommand{\uu}{\mathcal{U}}
\providecommand\ldb{\llbracket}
\providecommand\rdb{\rrbracket}

\hypersetup{
	pdftitle={OHF},
	colorlinks=true,
	linkcolor=blue,
	citecolor=cyan,
	urlcolor=wine
}

\keywords{Furstenberg monoid, atomic monoid, Puiseux monoid, nearly Furstenberg, almost Furstenberg, quasi-Furstenberg}

\subjclass[2010]{Primary: 20M13, 11Y05; Secondary: 20M14, 06F05}


\begin{document}
	
	\mbox{}
	\title{The Furstenberg property in Puiseux monoids}
	
	\author{Andrew Lin}
	\address{Hopewell Valley Central High School\\Pennington, NJ 08534}
	\email{andrewlin@hvrsd.org}

	\author{Henrick Rabinovitz}
	\address{Melrose High School\\Melrose, MA 02176}
	\email{hrey77@gmail.com}

	\author{Qiao Zhang}
	\address{Sierra Canyon School\\Chatsworth, CA 91311}
	\email{tigerzhang343@gmail.com}

\date{\today}

\begin{abstract}
	 Let $M$ be a commutative monoid. The monoid $M$ is called atomic if every non-invertible element of $M$ factors into atoms (i.e., irreducible elements), while $M$ is called a Furstenberg monoid if every non-invertible element of $M$ is divisible by an atom. Additive submonoids of $\qq$ consisting of nonnegative rationals are called Puiseux monoids, and their atomic structure has been actively studied during the past few years. The primary purpose of this paper is to investigate the property of being Furstenberg in the context of Puiseux monoids. In this direction, we consider some properties weaker than being Furstenberg, and then we connect these properties with some atomic results which have been already established for Puiseux monoids.
\end{abstract}
\medskip

\maketitle

\bigskip
\section{Introduction}
\label{sec:intro}

An element in a commutative monoid is atomic if it is invertible or it factors into atoms (i.e., irreducible elements). Following Cohn~\cite{pC68}, we say that a commutative monoid is atomic if every element is atomic. Before 1990, atomicity was studied in its own only sporadically and mainly in connection to ascending chains of principal ideals (see, for instance, \cite{aG74,aZ82}). A more systematic study of atomicity started right after the paper \cite{AAZ90}, where the authors introduced some notions stronger than atomicity and provided the first dedicated investigation of factorizations in the context of integral domains. Divisibility properties weaker than atomicity have also been considered in recent years: see, for instance, \cite{CZ04,BC15} and the more recent paper~\cite{nLL19}. The Furstenberg property, which is one of the divisibility properties we consider here, was coined by Clark~\cite{pC17}. A commutative monoid is said to be Furstenberg if every non-invertible element is divisible by an atom.  It follows directly from the definitions that every atomic monoid is Furstenberg. The Furstenberg property has been recently considered in~\cite{nLL19,GZ22} in the context of integral domains, in~\cite{GL22} for rings of integer-valued polynomials, and in~\cite{GP23} in the context of polynomial semidomains.
\smallskip

The primary purpose of this paper is to provide a better understanding of the atomic structure of non-atomic Puiseux monoids, and we do so by studying some weaker notions of the Furstenberg property, which were already considered in~\cite{nLL19} for integral domains. A Puiseux monoid is a subset of $\qq$ that consists of nonnegative rationals, contains~$0$, and is closed under addition. Although Puiseux monoids played some isolated roles in the 1990s in the setting of factorizations in integral domains (see, for instance, \cite[Example~2.1]{AAZ90} and~\cite[Example~7]{AM96}), they were not systematically considered in the literature after the appearance of the paper~\cite{fG17}, where Puiseux monoids are first investigated in their own right. Puiseux monoids are perhaps the most elementary algebraic structures showing an intricate atomic structure. Because of this, Puiseux monoids have been actively considered in recent years as more interest in the atomic theory of algebraic structures has emerged (see \cite{GGT21} and references therein). It is worth emphasizing that although the atomic structure of atomic Puiseux monoids has been actively investigated during the past few years, the same cannot be said for non-atomic Puiseux monoids, even though some useful and interesting Puiseux monoids are not atomic (see, for instance, \cite[Example~4.11]{GZ22} and \cite[Section~4]{GV23}).
\smallskip

Besides being studied in their own right, Puiseux monoids have found applications and been investigated beyond semigroup theory. Perhaps the first relevant application of Puiseux monoids goes back to the 1970s, where the Puiseux monoid generated by the sequence $\big( \frac{1}{2^n p_n}\big)_{n \ge 1}$ was used by Grams~\cite{aG74} as the main ingredient to construct the first atomic integral domain that does not satisfy the ascending chain condition on principal ideals. Most of the applications of Puiseux monoids can be found in semigroup algebras: for instance, they have been used by Coykendall and Gotti~\cite[Theorem~5.4]{CG19} to show that atomicity does not ascend from monoids to monoid algebras, and they have been used by Gotti and Zafrullah~\cite[Example~4.11]{GZ22} to argue that the irreducible-divisor-finite property (introduced by Grams and Warner in~\cite{GW75}) does not ascend from monoids to monoid algebras. In addition, the paper~\cite{fG22} is devoted to the study of factorizations in monoid algebras constructed from Puiseux monoids. Recent constructions in commutative ring theory involving Puiseux monoids can be found in~\cite{GL23}. Finally, Puiseux monoids were investigated by Baeth and Gotti~\cite{BG20} in the setting of factorization of matrices, while Puiseux monoids were considered by Bras-Amor\'os~\cite{mB20} in connection with music theory.
\smallskip

Let $M$ be a commutative monoid. Following Coykendall, Dobbs, and Mullins~\cite{CDM99}, we say that $M$ is antimatter if the set of atoms of $M$ is empty. The monoid $M$ is called nearly Furstenberg if there exists $c \in M$ such that for all non-invertible $b \in M$ there exists an atom $a \in M$ such that $a$ divides $bc$ in $M$ but $a$ does not divide $c$ in $M$. Also, $M$ is called quasi-Furstenberg (resp., almost Furstenberg) if for each non-invertible $b \in M$, there exists an element $c \in M$ (resp., an atomic element $c \in M$) and an atom $a \in M$ such that $a$ divides $bc$ in $M$ but $a$ does not divide $c$ in $M$. It follows directly from the given definitions that every Furstenberg monoid is both nearly Furstenberg and almost Furstenberg and also that every nearly or almost Furstenberg monoid is quasi-Furstenberg. The implications in Diagram~\eqref{eq:Furstenberg diagram} illustrate this observations (for simplicity, we have use the non-standard acronyms FM, NFM, AFM, and QFM in the same diagram for Furstenberg, nearly Furstenberg, almost Furstenberg, and quasi-Furstenberg monoids, respectively). The weaker notions of the Furstenberg property we have introduced in this paragraph were all introduced and first studied by Lebowitz-Lockard in~\cite{nLL19}.

\begin{equation} \label{eq:Furstenberg diagram}
	\begin{tikzcd}
		\textbf{ FM } \ \arrow[r, Rightarrow] \arrow[red, r, Leftarrow, "/"{anchor=center,sloped}, shift left=1.7ex] \arrow[d, Rightarrow, shift right=1ex] \arrow[red, d, Leftarrow, "/"{anchor=center,sloped}, shift left=1ex]& \ \textbf{ NFM } \arrow[d, Rightarrow, shift right=0.6ex] \arrow[red, d, Leftarrow, "/"{anchor=center,sloped}, shift left=1.3ex] \\
		\textbf{ AFM } \ \arrow[r, Rightarrow]	\arrow[red, r, Leftarrow, "/"{anchor=center,sloped}, shift left=1.7ex] & \ \textbf{ QFM } \arrow[r, Rightarrow]  \arrow[red, r, Leftarrow, shift left=1.7ex] & \textbf{ non-antimatter}
	\end{tikzcd}
\end{equation}

This paper is organized as follows. In Section~\ref{sec:background}, we introduce and briefly discuss most of the notation, terminology, and known non-standard results relevant to the development of the subsequent sections of content. In Section~\ref{sec:Furstenbergness}, we study in the setting of Puiseux monoids the properties shown in Diagram~\eqref{eq:Furstenberg diagram}. Among other findings, we show that none of the implications represented by broken (red) arrows in Diagram~\eqref{eq:Furstenberg diagram} hold in the class of Puiseux monoids. We also prove a result that yields as a corollary that every Puiseux monoid that is not antimatter must be quasi-Furstenberg, which is represented in Diagram~\eqref{eq:Furstenberg diagram} by the rightmost horizontal reversed (red) arrow. Note that the parallel rightmost horizontal black arrow assumes that the monoid is not a group. In Section~\ref{sec:atomicity}, we provide further examples to help our audience understand the inter-connection of the properties already studied in Section~\ref{sec:Furstenbergness} with the atomic structure of non-atomic Puiseux monoids. We construct a Puiseux monoid that is Furstenberg but not almost atomic as well as a Puiseux monoid that is nearly atomic but not Furstenberg (see Section~\ref{sec:atomicity} for the definitions of nearly and almost atomicity).

\bigskip
\section{Background}
\label{sec:background}

In this section, we introduce some terminology and definitions mostly related to the atomicity of cancellative commutative monoids.

\smallskip
\subsection{General Notation}

We use $\nn$ and $\nn_0$ to denote the sets of positive and non-negative integers, respectively. Also, we let $\pp$ denote the set of primes. Following standard notation, we let $\zz$, $\qq$, and $\rr$ denote the sets of integers, rationals, and real numbers, respectively. For $r \in \rr$ and $X \subseteq \rr$, we set $X_{\ge r} := \{x \in X \mid x \ge r\}$. For $a,b \in \rr$ with $a \le b$, we set
\[
	\ldb a, b \rdb := \{n \in \zz \mid a \le n \le b\}.
\]
For $q \in \qq_{> 0}$, we let $\mathsf{n}(q)$ and $\mathsf{d}(q)$ denote the unique pair of elements of $\nn$ satisfying $\gcd(\mathsf{n}(q), \mathsf{d}(q)) = 1$ and $q = \frac{\mathsf{n}(q)}{\mathsf{d}(q)}$: in this case, we call $\mathsf{n}(q)$ and $\mathsf{d}(q)$ the \emph{numerator} and \emph{denominator} of $q$, respectively. For a subset $Q$ of $\qq_{>0}$, we call the set
\[
	\{p \in \pp \mid p \text{ divides } \mathsf{d}(q) \text{ for some } q \in Q\}
\] 
the \emph{prime support} of $Q$. For $p \in \pp$ and $n \in \nn$, the value $v_p(n)$ is the exponent of the maximal power of $p$ dividing $n$. In addition, the $p$-\emph{adic valuation} is the map $v_p \colon \qq \to \zz$ defined by $v_p(q) = v_p(-q) = v_p(\mathsf{n}(q)) - v_p(\mathsf{d}(q))$ for $q \in \qq_{> 0}$ and $v_p(0) = \infty$. For any two sets $S$ and $T$, we let $S \Delta T$ denote the symmetric difference of $S$ and $T$; that is, $S \Delta T = (S \setminus T) \cup (T \setminus S)$. 

\smallskip
\subsection{Commutative Monoids}

Over the course of this paper, the word \textit{monoid} will refer to a cancellative commutative monoid. Let $M$ be a monoid. Since $M$ is assumed to be commutative, we will always use additive notation: ``$+$" denotes the operation of $M$ and $0$ denotes the identity element (note the contrast with the multiplicative notation we have used in the introduction). If $M = \{0\}$, then we say that $M$ is \emph{trivial}. We denote the set $M \setminus \{0\}$ as $M^{\bullet}$. The group consisting of all invertible elements of $M$ is denoted by $\uu(M)$. The monoid $M$ is called \emph{reduced} if $\uu(M) = \{0\}$. A \emph{submonoid} of $M$ is a subset of $M$ that contains $0$ and is closed under addition. We say that $M$ is \emph{generated} by a subset $S$ provided that the smallest submonoid (under inclusion) of $M$ containing $S$ is $M$ itself: in this case, we write $M = \langle S \rangle$. If there exists a finite subset $S$ of $M$ such that $M = \langle S \rangle$, then $M$ is called \emph{finitely generated}. For general background in commutative monoids, see~\cite{pG01}.

An element $a \in M \setminus \uu(M)$ is called an \emph{atom} (or an \emph{irreducible}) if when the equality $a = x + y$ holds for some $x,y \in M$, then either $x \in \uu(M)$ or $y \in \uu(M)$. We let $\mathcal{A}(M)$ denote the set of atoms of $M$. When $\mathcal{A}(M)$ is empty, we say that $M$ is \emph{antimatter}. An element $b \in M$ is called \emph{atomic} if either $b \in \uu(M)$ or $b$ can be written as a sum of atoms. The monoid $M$ is called \emph{atomic} if every element of $M$ is atomic. For $b,c \in M$, we say that $c$ \emph{divides} $b$ in $M$ if there exists $d \in M$ such that $b = c+d$, in which case, we write $c \mid_M b$. An element $b \in M$ is called a \emph{Furstenberg element} if either $b \in \uu(M)$ or $b$ is divisible by an atom in $M$. The monoid $M$ is called a \emph{Furstenberg monoid} provided that every element of $M$ is a Furstenberg element. It follows directly from the definitions that every atomic element is a Furstenberg element and, therefore, every atomic monoid is a Furstenberg monoid.

A \emph{Puiseux monoid} is an additive submonoid of $\mathbb{Q}_{\ge 0}$. The atomic structure of Puiseux monoids have been systematically investigated during the past few years (see \cite{CGG20,GGT21} and references therein). A special subclass of Puiseux monoids is that consisting of numerical monoids: a \emph{numerical monoid} is an additive submonoid $N$ consisting of nonnegative integers such that $\nn_0 \setminus N$ is finite. Although every numerical monoid is finitely generated, and so atomic in light of \cite[Proposition~2.7.8]{GH06}, the same does not hold for Puiseux monoids in general. For instance, $\qq_{\ge 0}$ is a non-atomic Puiseux monoid with no atoms, while $\{0\} \cup \qq_{\ge 1}$ is an atomic Puiseux monoid whose set of atoms is the infinite set $\qq \cap [1,2)$. In addition, not every Puiseux monoid is Furstenberg (as it is the case of $\qq_{\ge 0}$) and there are Puiseux monoids that are Furstenberg but not atomic, as we shall see in Section~\ref{sec:atomicity}. See \cite{CGG20a} for a survey on the atomicity of Puiseux monoids.

One can check that if $\varphi \colon M \to M'$ is a homomorphism between two Puiseux monoids $M$ and $M'$, then there must exist $q \in \qq_{\ge 0}$ such that $M' = qM$; that is, the homomorphisms between Puiseux monoids are given by rational multiplication. Using this, we can argue the following lemma, which will be referenced several times throughout the paper.

\begin{lemma} \label{lem:prelim isomorphism}
	Let $M$ and $M'$ be two isomorphic Puiseux monoids, and let $P$ and $P'$ be the prime supports of $M^\bullet$ and $M'^\bullet$ respectively. Then the following statements hold.
	\begin{enumerate}
		\item $|P \Delta P'| < \infty$.
		\smallskip
		
		\item $\inf v_p(M^\bullet) = -\infty$ if and only if $\inf v_p(M'^\bullet) = -\infty$ for every $p \in \pp$. 
	\end{enumerate}
\end{lemma}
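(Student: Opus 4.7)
The plan is to reduce both statements to the structural observation in the paragraph preceding the lemma: every homomorphism between Puiseux monoids is given by multiplication by a fixed nonnegative rational. If $M = \{0\}$ then $M' = \{0\}$ as well and both claims are vacuous, so I will assume $M$ is nontrivial and fix $q \in \qq_{>0}$ with $M' = qM$, so that the given isomorphism sends $x$ to $qx$. Writing $q = \mathsf{n}(q)/\mathsf{d}(q)$ in lowest terms, the identity to be used repeatedly is
\[
    v_p(qx) \, = \, v_p(q) + v_p(x) \, = \, v_p(\mathsf{n}(q)) - v_p(\mathsf{d}(q)) + v_p(x),
\]
valid for every prime $p$ and every $x \in M^\bullet$.

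For part~(1), I will use that $p \in P$ if and only if there exists $x \in M^\bullet$ with $v_p(x) < 0$, and analogously for $P'$. Let $S := \{p \in \pp : p \mid \mathsf{n}(q)\mathsf{d}(q)\}$, which is finite. For any prime $p \notin S$ one has $v_p(q) = 0$, so $v_p(qx) < 0$ if and only if $v_p(x) < 0$; since the map $x \mapsto qx$ is a bijection $M^\bullet \to M'^\bullet$, this yields $p \in P$ if and only if $p \in P'$ whenever $p \notin S$. Therefore $P \Delta P' \subseteq S$, which is finite.

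For part~(2), the relation $M'^\bullet = qM^\bullet$ gives that $v_p(M'^\bullet)$ is the integer translate $v_p(q) + v_p(M^\bullet)$, and shifting a set of integers by a fixed integer does not affect whether its infimum equals $-\infty$. I do not expect a serious obstacle here: the only substantive input is that Puiseux-monoid isomorphisms are rational dilations, which is granted just before the lemma, and everything else is routine bookkeeping with the $p$-adic valuation.
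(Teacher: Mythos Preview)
Your proof is correct and follows essentially the same approach as the paper: both reduce to the fact that $M' = qM$ for some $q \in \qq_{>0}$, use the identity $v_p(M'^\bullet) = v_p(q) + v_p(M^\bullet)$, and observe that only finitely many primes have $v_p(q) \neq 0$. The only cosmetic difference is that you phrase part~(1) as $P \Delta P' \subseteq S$ directly, while the paper argues the contrapositive ($p \in P \Delta P' \Rightarrow v_p(q) \neq 0$).
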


\begin{proof}
	(1) Since $M$ and $M'$ are isomorphic Puiseux monoids, \cite[Proposition~3.2]{fG18} guarantees the existence of $q \in \qq_{> 0}$ such that $M' = qM$ (if $q=0$, then both $M$ and $M'$ must be trivial monoids and then we can replace $q$ be any positive rational). Therefore $M'^\bullet=qM^\bullet$, and so $v_p(M'^\bullet) = v_p(q) + v_p(M^\bullet)$ for any prime $p$. Now, if $p\in P\Delta P'$, then one of the sets $v_p(M^\bullet)$ and $v_p(M'^\bullet)$ is a subset of $\nn_0$ and the other is not; in particular, $v_p(M^\bullet)\ne v_p(M'^\bullet)=v_p(q)+v_p(M^\bullet)$, so $v_p(q)\ne 0$. As there are only finitely many primes $p$ such that $v_p(q)\ne 0$, the symmetric difference $P\Delta P'$ must be finite.
	\smallskip
	
	(2) Fix a prime $p$. From the fact that $v_p(M'^\bullet)=v_p(q)+v_p(M^\bullet)$, we obtain that $\inf v_p(M'^\bullet)=v_p(q)+\inf v_p(M^\bullet)$. Thus, if one of $\inf v_p(M'^\bullet)$ and $\inf v_p(M^\bullet)$ is greater than $-\infty$, then so is the other.
\end{proof}

\bigskip
\section{Furstenbergness}
\label{sec:Furstenbergness}

\medskip
\subsection{Quasi-Furstenberg Puiseux Monoids} Let $M$ be a monoid. We say that $M$ is \emph{quasi-Furstenberg} if for each $b \in M \setminus \uu(M)$, there exists $c \in M$ and $a \in \mathcal{A}(M)$ such that $a \mid_M b+c$ but $a \nmid_M c$. We say that an element $b \in M$ is \emph{quasi-atomic} if there exists an element $c \in M$ such that $b+c$ is atomic. The monoid $M$ is called \emph{quasi-atomic} if every element of $M$ is quasi-atomic. The notion of quasi-atomicity was introduced by Boynton and Coykendall~\cite{BC15}.

It turns out that there is a large amount of quasi-Furstenberg Puiseux monoids. Indeed, as the following proposition shows, every Puiseux monoid containing at least one atom is quasi-atomic, and every quasi-atomic (Puiseux) monoid is quasi-Furstenberg.

\begin{prop} \label{prop:antimatter or QA}
	The following statements hold.
	\begin{enumerate}
		\item Every quasi-atomic monoid is quasi-Furstenberg.
		\smallskip
		
		\item Every Puiseux monoid is antimatter or quasi-atomic.
		\smallskip
		
		\item A nontrivial Puiseux monoid is quasi-Furstenberg if and only if it is quasi-atomic.
	\end{enumerate}
\end{prop}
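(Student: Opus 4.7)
The plan is to prove the three parts sequentially, using parts~(1) and~(2) to derive part~(3). For~(1), fix a non-invertible $b$ in a quasi-atomic monoid $M$, and pick $c \in M$ such that $b+c$ is atomic, with atomic decomposition $b+c = a_1 + \dots + a_n$. Since $b$ is non-invertible and $M$ is cancellative, $b+c$ cannot be a unit (otherwise $b+c+d = 0$ for some $d \in M$ would force $b \in \uu(M)$), so $n \ge 1$. Choosing the pair $(c, n)$ so that $n$ is as small as possible among all such atomic decompositions, I claim $a_1 \nmid_M c$: otherwise, writing $c = a_1 + c'$ and cancelling $a_1$ yields $b + c' = a_2 + \dots + a_n$ when $n \ge 2$, contradicting the minimality of $n$, or $b + c' = 0$ when $n = 1$, contradicting $b \notin \uu(M)$. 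Hence the pair $(c, a_1)$ witnesses the quasi-Furstenberg condition for $b$.

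For~(2), suppose $M$ is not antimatter and fix an atom $a \in \mathcal{A}(M)$. I must show every $b \in M$ is quasi-atomic; the case $b = 0$ is immediate (take $c = 0$), so assume $b \in M^\bullet \subseteq \qq_{> 0}$. Writing $b/a = p/q$ with $p, q \in \nn$ coprime, I obtain $qb = pa$, so setting $c := (q-1) b \in M$ yields $b + c = qb = pa$, which is a sum of $p$ copies of the atom $a$ and hence atomic.

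Part~(3) then follows at once. The backward implication is exactly~(1), while for the forward direction, if $M$ is a nontrivial quasi-Furstenberg Puiseux monoid, applying the quasi-Furstenberg condition to any nonzero $b \in M$ produces an atom of $M$, so $M$ is not antimatter, and then~(2) forces $M$ to be quasi-atomic.

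The main technical subtlety is the minimality argument in~(1): one must handle the edge case $n = 1$ carefully to derive the contradiction from $b + c' = 0$, and note that a minimum over positive integers exists automatically. Everything else reduces to routine rational arithmetic and the fact that Puiseux monoids are reduced, so that ``non-invertible'' is synonymous with ``nonzero.''
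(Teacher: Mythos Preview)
Your proof is correct and follows essentially the same approach as the paper: the same minimality argument for~(1), the same rational-multiple trick for~(2), and the same combination of the two for~(3). Your presentation is in fact slightly more careful in places --- you explicitly justify $n \ge 1$ and handle the $n=1$ edge case separately, whereas the paper leaves these implicit --- but the underlying ideas coincide.
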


\begin{proof}
	(1) Suppose that $M$ is a quasi-atomic monoid. If $M$ is a group, then it follows directly from the definition that $M$ is quasi-Furstenberg. Therefore we assume that $M$ is not a group. Fix a non-invertible element $b \in M$. As $M$ is quasi-atomic, we can choose $c \in M$ such that the element $b+c$ is atomic. We can assume that we have chosen a $c$ that minimizes the number of atoms (counting repetitions) in an atomic decomposition of $b+c$. Now take the minimum $k \in \nn$ such that $b+c = a_1 + \dots + a_k$ for some $a_1, \dots, a_k \in \mathcal{A}(M)$. The minimality of both $c$ and $k$ guarantees that $a_1 \nmid_M c$. Hence $a_1$ is an atom of $M$ such that $a_1 \mid_M b+c$ but $a_1 \nmid_M c$. As a result, we can conclude that $M$ is quasi-Furstenberg.
	\smallskip
	
	(2) Let $M$ be a Puiseux monoid. Assume that $M$ is not antimatter, and let $a$ be an atom of $M$. It is clear that $0$ is a quasi-atomic element. Thus, fix a nonzero element $b \in M$, and let us argue that $b$ is quasi-atomic. To do so, first observe that the equality $\mathsf{n}(b) \mathsf{d}(a) a = \mathsf{n}(a) \mathsf{d}(b) b$ holds. Then, after setting $c :=(\mathsf{n}(a) \mathsf{d}(b) - 1)b$, we find that $b + c = \mathsf{n}(a) \mathsf{d}(b) b = \mathsf{n}(b) \mathsf{d}(a) a$. Since $a \in \mathcal{A}(M)$, the element $b+c$ is atomic and, therefore, $b$ is quasi-atomic. Hence $M$ is a quasi-atomic Puiseux monoid.
	\smallskip
	
	(3) The direct implication follows directly from part~(2) as every quasi-Furstenberg monoid that is not a group must contain at least one atom. The reverse implication follows from part~(1).
\end{proof}

Part~(2) of Proposition~\ref{prop:antimatter or QA} does not hold for classes different from the class of Puiseux monoids. The following example illustrates this observation.

\begin{example} \label{ex:Furstenberg not QA}
	Consider the additive submonoid
	\[
		M := (\nn_0 \times \{0\}) \cup (\zz \times \nn)
	\]
	of $\zz^2$, which is actually the nonnegative cone of $\zz^2$ when the latter is considered as a totally ordered group with respect to the lexicographical order with priority in the second coordinate. The element $a := (1,0)$ is the minimum nonzero element of $M$ with respect to the order of $\zz^2$ and, therefore, $a \in \mathcal{A}(M)$. As $a$ is the minimum element of $M^\bullet$, which is the positive cone of $\zz^2$, it follows that $a \mid_M b$ for every $b \in M^\bullet$. Thus, $M$ is a Furstenberg monoid with $\mathcal{A}(M) = \{a\}$. Observe that for each $b \in M$, the second coordinate of $(0,1) + b$ is positive and so $(0,1) + b \notin \nn_0 a$. As a consequence, $M$ is not quasi-atomic. Hence $M$ is neither antimatter nor quasi-atomic.
\end{example}

We can use Proposition~\ref{prop:antimatter or QA} to construct Puiseux monoids that are quasi-Furstenberg but not atomic.

\begin{example} \label{ex:a quasi-Furstenberg PM not atomic}
	Consider the Puiseux monoid $M := \big\langle \frac12, \frac1{3^n} \mid n \in \nn \big\rangle$. Observe that $\frac{1}{2}$ is an atom of $M$ as it is the only defining generating element whose denominator is divisible by the prime $2$. Since $\mathcal{A}(M) \subseteq \big\{  \frac12, \frac1{3^n} \mid n \in \nn \big\}$ and $\frac1{3^n} = 3 \cdot \frac1{3^{n+1}}$ for every $n \in \nn_0$, it follows that $\mathcal{A}(M) = \big\{ \frac12 \big\}$. Then $M$ is a quasi-Furstenberg monoid by Proposition~\ref{prop:antimatter or QA}. However, $M$ is not atomic as, for instance, $\frac13 \notin \langle \frac12 \rangle = \nn_0 \frac12$.
\end{example}

\medskip
\subsection{Almost Furstenberg and Nearly Furstenberg Puiseux Monoids} An element $b \in M$ is called \emph{almost Furstenberg} if either $b \in \uu(M)$ or if there exists an atomic element $c \in M$ and $a \in \mathcal{A}(M)$ such that $a \mid_M b+c$ but $a \nmid_M c$. The monoid $M$ is called \emph{almost Furstenberg} if every element of $M$ is an almost Furstenberg element. It follows directly from the definitions that every Furstenberg monoid is almost Furstenberg and also that every almost Furstenberg monoid is quasi-Furstenberg.

\begin{prop} \label{prop:almost Furstenberg conditions}
	Let $M$ be a Puiseux monoid. If $|\mathcal{A}(M)| = 1$, then $M$ is almost Furstenberg if and only if $M \cong (\nn_0,+)$.
\end{prop}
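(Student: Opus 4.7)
The plan is to dispatch the reverse implication immediately and then prove the forward implication by two steps: first forcing the ``witness'' in the almost Furstenberg condition to be $0$, and then running a descent argument to show $M = \nn_0 a$.

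For the reverse direction, $(\nn_0,+)$ is atomic with unique atom $1$, so it is Furstenberg and hence almost Furstenberg; this property transfers across isomorphisms.

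For the forward direction, assume $\mathcal{A}(M) = \{a\}$ and $M$ is almost Furstenberg. The first observation is that the atomic elements of $M$ form exactly the set $\nn_0 a$, since any non-invertible atomic element is a sum of copies of the unique atom $a$. Because $M$ sits inside $\qq_{\ge 0}$, it is reduced, and $-a \notin M$, so $a \nmid_M 0$; on the other hand, $a \mid_M ka$ for every $k \ge 1$. Hence $0$ is the only atomic element of $M$ that is not divisible by $a$. Now fix any $b \in M^\bullet$: the almost Furstenberg hypothesis supplies an atomic $c \in M$ and an atom, necessarily $a$, with $a \mid_M b+c$ and $a \nmid_M c$; by the previous sentence $c = 0$, and therefore $a \mid_M b$.

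The final step is a descent argument. Starting from $b \in M^\bullet$, the paragraph above gives $b - a \in M$. If $b - a = 0$ we stop; otherwise $b - a$ is a nonzero element of $M$, hence non-invertible, and we may apply the same step to obtain $b - 2a \in M$, and iterate. If this process never terminated, we would have $b \ge ka$ for every $k \in \nn$, contradicting that $b$ is a fixed positive rational. Thus $b = ka$ for some $k \in \nn$, proving $M \subseteq \nn_0 a$; the reverse inclusion is automatic from $a \in M$, so $M = \nn_0 a$, and the map $ka \mapsto k$ is the desired isomorphism $M \cong (\nn_0,+)$. The only delicate step in the argument is forcing $c = 0$, which is a clean consequence of having a unique atom combined with the reducedness of Puiseux monoids.
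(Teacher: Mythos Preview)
Your proof is correct and follows essentially the same approach as the paper's: both force the witness $c$ to be $0$ by noting that the only atomic element not divisible by the unique atom $a$ is $0$, and then use the positivity of $a$ to conclude $M = \nn_0 a$. The only cosmetic difference is that the paper takes the largest $n$ with $na \mid_M b$ and argues $b - na = 0$, whereas you phrase the same step as an iterated subtraction that must terminate.
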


\begin{proof}
	Assume that $|\mathcal{A}(M)| = 1$, and let $a$ be the only atom of $M$. The reverse implication is straightforward as the monoid $(\nn_0,+)$ is atomic. For the direct implication, suppose that $M$ is almost Furstenberg. Then, since the only atom of $M$ is $a$, for any nonzero $b\in M$, there exists an atomic element $c\in M$ such that $a\mid_M b+c$ but $a\nmid_M c$. However, for any atomic element $c \in M$ such that $a \nmid_M c$ we see that $c$ is atomic yet divisible by no atoms (since $a$ is the only atom of $M$), hence $c=0$. This means that for every nonzero $b\in M$, we must have $a\mid_M b+0=b$. Since $a$ is an atom of $M$, it is positive. Now fix a nonzero $b \in M$. Let $na$ be the largest multiple of $a$ that divides $b$ in $M$. Observe that $n$ must be a positive integer. 
Now, $b-na$ is an element of $M$ that is not divisible by $a$, and we have proved that the only such element is $0$, so $b=na$. Hence $M\subseteq \nn_0a$. The reverse inclusion follows from the fact that $a \in M$. Thus, $M = \nn_0 a$, which is isomorphic to $(\nn_0,+)$.
\end{proof}

Motivated by Example~\ref{ex:a quasi-Furstenberg PM not atomic}, we can use Proposition~\ref{prop:almost Furstenberg conditions} to construct quasi-Furstenberg Puiseux monoids that are not almost Furstenberg.

\begin{cor}  \label{cor:QF not AF}
	There are infinitely many non-isomorphic Puiseux monoids that are quasi-Furstenberg but not almost Furstenberg.
\end{cor}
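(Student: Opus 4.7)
The plan is to produce an infinite family of such monoids by a minor variant of Example~\ref{ex:a quasi-Furstenberg PM not atomic}: for each odd prime $q$, set
\[
M_q := \Big\langle \tfrac{1}{2},\ \tfrac{1}{q^n} : n \in \nn \Big\rangle.
\]
Repeating the reasoning of Example~\ref{ex:a quasi-Furstenberg PM not atomic} with $3$ replaced by $q$, the element $\tfrac{1}{2}$ is an atom because it is the only generator whose denominator is divisible by $2$, while each $\tfrac{1}{q^n} = q \cdot \tfrac{1}{q^{n+1}}$ is not irreducible. Hence $\mathcal{A}(M_q) = \{\tfrac{1}{2}\}$, so by Proposition~\ref{prop:antimatter or QA}(3) every $M_q$ is quasi-Furstenberg.

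Next, I would show that no $M_q$ is almost Furstenberg. Since $|\mathcal{A}(M_q)| = 1$, Proposition~\ref{prop:almost Furstenberg conditions} reduces this to verifying that $M_q \not\cong (\nn_0,+)$. This is immediate: $M_q$ contains $\tfrac{1}{q^n}$ for every $n \in \nn$, so it has no least positive element. However, by the observation preceding Lemma~\ref{lem:prelim isomorphism}, any Puiseux monoid isomorphic to $(\nn_0,+)$ has the form $r \cdot \nn_0$ for some $r \in \qq_{>0}$, and therefore does have a least positive element. Hence $M_q \not\cong (\nn_0,+)$, so $M_q$ is not almost Furstenberg.

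Finally, I would invoke Lemma~\ref{lem:prelim isomorphism}(2) to show that the monoids $\{M_q\}$ are pairwise non-isomorphic. Fix distinct odd primes $q$ and $q'$. No generator of $M_{q'}$ has denominator divisible by $q$, so $v_q(M_{q'}^\bullet) \subseteq \nn_0$ and thus $\inf v_q(M_{q'}^\bullet) \geq 0$. On the other hand, the presence of $\tfrac{1}{q^n}$ in $M_q$ for every $n$ gives $\inf v_q(M_q^\bullet) = -\infty$. By Lemma~\ref{lem:prelim isomorphism}(2) this forces $M_q \not\cong M_{q'}$, and since there are infinitely many odd primes the corollary follows.

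The only mild obstacle I anticipate is the routine $v_2$-valuation verification that $\tfrac{1}{2}$ remains an atom of $M_q$, which is already essentially carried out in Example~\ref{ex:a quasi-Furstenberg PM not atomic}; everything else is the assembly of results already established in this section.
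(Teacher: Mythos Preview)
Your proposal is correct and follows essentially the same approach as the paper: the same family $M_q = \langle \tfrac12, \tfrac1{q^n} \mid n \in \nn\rangle$ over odd primes $q$, with $\mathcal{A}(M_q)=\{\tfrac12\}$ yielding quasi-Furstenberg via Proposition~\ref{prop:antimatter or QA}, failure of almost Furstenberg via Proposition~\ref{prop:almost Furstenberg conditions}, and pairwise non-isomorphism via the $q$-adic valuation criterion of Lemma~\ref{lem:prelim isomorphism}(2). The only cosmetic difference is that the paper rules out $M_q\cong(\nn_0,+)$ by noting $M_q$ is not finitely generated, whereas you note it has no least positive element; both are immediate.
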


\begin{proof}
	For each odd prime $p$, consider the Puiseux monoid $M_p := \big\langle \frac12, \frac1{p^n} \mid n \in \nn \big\rangle$. Fix $p \in \pp \setminus \{2\}$. We can proceed as in Example~\ref{ex:a quasi-Furstenberg PM not atomic} to verify that $M_p$ is a quasi-Furstenberg non-atomic monoid with $\mathcal{A}(M_p) = \big\{ \frac12 \big\}$. Since $|\mathcal{A}(M_p)| = 1$ and $M_p$ is not finitely generated, it follows from Proposition~\ref{prop:almost Furstenberg conditions} that $M_p$ is not almost Furstenberg. Finally, observe that if $q$ is an odd prime with $q \neq p$, then $\inf v_p(M_p^\bullet) = -\infty$ while $\inf v_p(M_q^\bullet) = 0$, and so it follows from part~(2) of Lemma~\ref{lem:prelim isomorphism} that $M_p$ and $M_q$ are not isomorphic Puiseux monoids.
\end{proof}

As the following proposition indicates, the class of almost Furstenberg non-Furstenberg Puiseux monoid is nonempty.

\begin{prop} \label{prop:AF not F}
	There are infinitely many non-isomorphic  Puiseux monoids that are almost Furstenberg but not Furstenberg.
\end{prop}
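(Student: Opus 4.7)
The plan is to exhibit an infinite family $\{M_p\}_{p \in S}$ of pairwise non-isomorphic Puiseux monoids, indexed by an infinite set $S \subseteq \pp$, such that each $M_p$ is almost Furstenberg but not Furstenberg, and then to distinguish them up to isomorphism using Lemma~\ref{lem:prelim isomorphism}. The construction I would attempt takes each $M_p$ of the general form $\langle \mathcal{B} \cup C_p \rangle$, where $\mathcal{B}$ is a fixed finite set of rationals (for concreteness I would start with $\mathcal{B} = \{1/2,\, 1/3\}$) whose images in $M_p$ turn out to be precisely the atoms, and $C_p$ is a $p$-indexed family of ``antimatter'' generators (for instance, generators of the form $1/(6p^n)$ with $n \in \nn$, or a suitably modified variant with bridge terms) designed so that (a) no element of $C_p$ is an atom, typically because each such generator is a sum of $p$ copies of the next, and (b) some combination of these generators is not divisible by any element of $\mathcal{B}$.

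After fixing the generating set, I would first verify $\mathcal{A}(M_p) = \mathcal{B}$ using $\pval$ arguments analogous to those in Example~\ref{ex:Furstenberg not QA} and Corollary~\ref{cor:QF not AF}: each element of $\mathcal{B}$ must be shown indecomposable in $M_p$, while each element of $C_p$ must be exhibited as a nontrivial sum. The condition $|\mathcal{B}| \ge 2$ is imposed so that Proposition~\ref{prop:almost Furstenberg conditions} does not immediately preclude almost Furstenbergness. Next, I would exhibit a specific non-invertible $b^* \in M_p$ (most naturally the smallest element of $C_p$ or a short combination thereof) and argue via valuations that $b^* - a \notin M_p$ for every $a \in \mathcal{A}(M_p)$, so that $M_p$ is not Furstenberg.

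The crux of the argument is the almost-Furstenbergness verification. Given a noninvertible $b \in M_p$, I must produce an atomic $c \in M_p$ (a nonnegative integer combination $\alpha/2 + \beta/3$) and an atom $a \in \mathcal{A}(M_p)$ such that $a \mid_{M_p} b + c$ yet $a \nmid_{M_p} c$. The second condition restricts $c$ to a small residue set via a Frobenius-style analysis of $\langle 2, 3 \rangle \subseteq \nn_0$, in the spirit of the computations appearing just before Proposition~\ref{prop:antimatter or QA}; for each admissible residue I then need $b + c - a$ to be expressible as a nonnegative integer combination of the chosen generators. The main obstacle I foresee is engineering $C_p$ so that the antimatter contribution in $b$ cooperates with the fixed ``shift'' $c - a$ to land in $M_p$ in every case, while simultaneously preserving $\mathcal{A}(M_p) = \mathcal{B}$ and the non-Furstenberg property. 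Preliminary calculations suggest the naive choice $C_p = \{1/p^n : n \in \nn\}$ may fail for small $b = 1/p^k$; if so, I would add bridge terms carrying mixed denominators, such as $\{1/(6p^n)\}$ or $\{1/2 + 1/p^n\}$, tuned so that the equations $b + c - a \in M_p$ become solvable without introducing additional atoms.

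Finally, pairwise non-isomorphism of the family follows from Lemma~\ref{lem:prelim isomorphism}(2): for each $p \in S$ the antimatter block contributes $\inf \pval(M_p^\bullet) = -\infty$, whereas for a distinct prime $q \in S$ no generator of $M_q$ has $p$ in its denominator, so $\inf \pval(M_q^\bullet) = 0$. Hence $M_p \not\cong M_q$ whenever $p \ne q$, and the infinite family $\{M_p\}_{p \in S}$ yields infinitely many non-isomorphic almost Furstenberg non-Furstenberg Puiseux monoids as required.
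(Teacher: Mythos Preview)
Your proposal is a strategy outline rather than a proof, and the concrete instantiations you suggest do not work. With $\mathcal{B}=\{1/2,1/3\}$ and $C_p=\{1/(6p^n):n\in\nn\}$, the monoid becomes antimatter: since $p\cdot\frac1{6p}=\frac16\in M_p$, both $\frac12=3\cdot\frac16$ and $\frac13=2\cdot\frac16$ fail to be atoms, and each $\frac1{6p^n}=p\cdot\frac1{6p^{n+1}}$ is reducible as well. With the alternative $C_p=\{1/p^n:n\in\nn\}$ you do get $\mathcal{A}(M_p)=\{1/2,1/3\}$, but the monoid is \emph{not} almost Furstenberg. A short computation shows that the only atomic elements $c$ with $\frac12\nmid_{M_p}c$ are $c\in\{0,\tfrac13,\tfrac23\}$, and the only ones with $\frac13\nmid_{M_p}c$ are $c\in\{0,\tfrac12\}$; hence the possible ``shifts'' $c-a$ lie in $\{-\tfrac12,-\tfrac13,-\tfrac16,\tfrac16\}$. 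For $b=1/p^k$ with $p^k>6$, none of $b-\tfrac12$, $b-\tfrac13$, $b-\tfrac16$ is nonnegative, and $b+\tfrac16\notin M_p$ because $v_2(b+\tfrac16)=-1$ forces any representation to use at least one copy of $\tfrac12$, which already exceeds $b+\tfrac16$. Thus $1/p^k$ is not almost Furstenberg, and your proposed bridge terms $\tfrac12+\tfrac1{p^n}$ (already in $M_p$) do not help.

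The deeper obstacle is structural: once you insist on a \emph{finite} atom set $\mathcal{B}$, the atomic elements $c$ with $a\nmid_{M_p} c$ for some $a\in\mathcal{B}$ form (in these examples) a finite set, so the admissible shifts $c-a$ are finitely many fixed rationals. If your antimatter block $C_p$ contributes arbitrarily small nonzero elements $b$, then for $b$ below all these shifts the required membership $b+c-a\in M_p$ cannot hold. The paper sidesteps this entirely by taking the \emph{atoms} to be the infinite tailored family $\{r_n/p_n:n\in\nn\}$ (one for each dyadic rational $r_n>1$), with the antimatter block equal to $\nn_0[\tfrac12]$. Then for any non-Furstenberg $q$ (necessarily a dyadic rational) and any dyadic $c>1$, the sum $q+c$ equals some $r_j$ and is therefore divisible by the atom $r_j/p_j$, while a valuation argument shows $r_j/p_j\nmid c$. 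If you want to salvage your approach, you will need either infinitely many atoms or an antimatter block bounded away from zero; as written, the plan does not close.
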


\begin{proof}
	For each $\ell \in \nn$, let $P_\ell$ be an infinite set consisting of odd primes. Furthermore, assume that $P_k \cap P_\ell$ is empty for all $k,\ell \in \nn$ with $k \neq \ell$. For each $\ell \in \nn$, we will construct a Puiseux monoid $M_\ell$ that is almost Furstenberg but not Furstenberg is such a way that the prime support of $M_\ell$ is an infinite subset of $\{2\} \cup P_\ell$. Then for any distinct $k,\ell \in \nn$ the symmetric difference of the prime supports of $M_k$ and $M_\ell$ will be an infinite set and, therefore, the Puiseux monoids $M_k$ and $M_\ell$ will not be isomorphic in light of part~(1) of Lemma~\ref{lem:prelim isomorphism}.
	
	To do so, fix $\ell \in \nn$. The set $\nn_0\big[\frac{1}{2}\big]_{> 1}$, consisting of all dyadic rationals greater than $1$, is countable. Hence we can pick a sequence $(b_n)_{n \ge 1}$ of positive integers and a sequence $(k_n)_{n \ge 1}$ of nonnegative integers such that $\nn_0\big[\frac{1}{2}\big]_{> 1} = \big\{\frac{b_n}{2^{k_n}} \mid n \in \nn \big\}$. For each $n \in \nn$, set $r_n := \frac{b_n}{2^{k_n}}$. Let $(p_n)_{n \in \nn}$ be a sequence of primes in $P_\ell$ such that $p_n > b_n$ for every $n \in \nn$. Now, consider the Puiseux monoid
	\[
		M_\ell := \bigg\langle \nn_0\Big[\frac12 \Big] \bigcup \bigg\{ \frac{r_n}{p_n} \ \Big{|} \ n \in \nn \bigg\} \bigg\rangle.
	\]
	We proceed to show that $M_\ell$ is almost Furstenberg but not Furstenberg. First, we will argue the following claim.
	\smallskip
	
	\noindent \textit{Claim:} $\mathcal{A}(M_\ell) = \big\{ \frac{r_n}{p_n} \mid n \in \nn \big\}$.
	\smallskip
	
	\noindent \textit{Proof of Claim:} Fix $a \in \mathcal{A}(M_\ell)$. Since $M_\ell$ is a reduced monoid, either $a \in \nn_0\big[ \frac12\big]^\bullet$ or $a \in \big\{ \frac{r_n}{p_n} \mid n \in \nn \big\}$. However, observe that if $a \in \nn_0\big[ \frac12\big]^\bullet$, then we could write $a = \frac{c}{2^k}$ for some $c \in \nn$ and $k \in \nn_0$ and so the equality $a = \frac{c}{2^{k+1}} + \frac{c}{2^{k+1}}$ would contradict that $a$ is an atom. Thus, $a \in \big\{ \frac{r_n}{p_n} \mid n \in \nn \}$ and, as a result, the inclusion $\mathcal{A}(M_\ell) \subseteq \big\{ \frac{r_n}{p_n} \mid n \in \nn \big\}$ holds. We proceed to argue the reverse inclusion. Assume, for the sake of a contradiction, that $\frac{r_j}{p_j} \notin \mathcal{A}(M_\ell)$ for some $j \in \nn$, and write
	\begin{equation} \label{eq:auxi}
		\frac{b_j}{2^{k_j} p_j} = c_0\frac{b}{2^k} + \sum_{i=1}^n c_{s_i} \frac{b_{s_i}}{2^{k_{s_i}} p_{s_i}}
	\end{equation}
	for $n \in \nn$ and some subindices $s_1, \dots, s_n \in \nn \setminus \{j\}$ such that $s_1 < \dots < s_n$ and some coefficients $c_{s_1}, \dots, c_{s_n} \in \nn$. Since the $p_{s_1}$-adic valuation of the left-hand side of~\eqref{eq:auxi} is nonnegative, $p_{s_1} \mid c_{s_1}$. This implies that $c_{s_1} \ge p_{s_1}$. However, that means the right-hand side of the equality~\eqref{eq:auxi} is at least $\frac{b_{s_1}}{2^{k_{s_1}}} > 1$ while the left-hand side of the same equality is $\frac{b_j}{2^{k_j} p_j} < \frac{1}{2^{k_j}} \le 1$, a contradiction. Therefore $\mathcal{A}(M_\ell) \supseteq \big\{ \frac{r_n}{p_n} \mid n \in \nn \big\}$, and the claim is established.
	
	Proving that $M_\ell$ is not a Furstenberg monoid amounts to arguing that $1$ is not divisible by any atom. Assume, by way of contradiction, that $1$ is divisible by $\frac{r_j}{p_j}$ for some $j \in \nn$. Let $m$ be the largest positive integer such that $1 - m \frac{r_j}{p_j} \in M$. Therefore $1 - m \frac{r_j}{p_j}$ can be written as a sum of a dyadic rational and rational numbers of the form $\frac{r_k}{p_k}$ for positive integers $k \neq j$, which have nonnegative $p_j$-adic valuations. As a result, $1 - m \frac{r_j}{p_j}$ has nonnegative $p_j$-adic valuation. Since $v_{p_j}(r_j) = v_{p_j}(\frac{b_j}{2^{k_j}}) = 0$ by construction, we find that $p_j \mid m$. Thus, $1 - m \frac{r_j}{p_j} \le 1 - r_j < 0$, which is a contradiction. Hence $1$ is not divisible in $M_\ell$ by any atom, and so $M_\ell$ is not a Furstenberg monoid.
	
	Finally, we argue that $M_\ell$ is almost Furstenberg. To do this, fix $q \in M_\ell$. If $q$ is divisible by an atom $a$, then we are done: in this case, $q+0$ is divisible by $a$ in $M_\ell$ while $0$ is not. Otherwise, $q$ must be a dyadic rational. Let $c$ be any dyadic rational with $c > 1$, and take $i,j \in \nn$ such that $r_i = c$ and $r_j = q+c$. Then, $q+c$ is divisible by the atom $\frac{r_j}{p_j}$ in $M_\ell$. Assume, for the sake of contradiction, that $c$ is also divisible by the atom $\frac{r_j}{p_j}$ in $M_\ell$. Let $m$ be the largest positive integer such that $c - m \frac{r_j}{p_j} \in M_\ell$. Then, $c - m \frac{r_j}{p_j}$ can be written as a sum of a dyadic rational and rational numbers of the form $\frac{r_k}{p_k}$ for positive integers $k \neq j$, which have nonnegative $p_j$-adic valuations. Thus, $c - m \frac{r_j}{p_j}$ has nonnegative $p_j$-adic valuation. Since $v_{p_j}(r_j) = v_{p_j}(\frac{b_j}{2^{k_j}}) = 0$ by construction, we obtain that $p_j \mid m$. As a result, $c - m\frac{r_j}{p_j} \le c - r_j = -q$, a contradiction. Hence we conclude that $M_\ell$ is an almost Furstenberg Puiseux monoid that is not Furstenberg, which completes the proof.
\end{proof}

The monoid $M$ is called \emph{nearly Furstenberg} if there exists $c \in M$ such that for all $b \in M \setminus \uu(M)$ there exists $a \in \mathcal{A}(M)$ such that $a \mid_M b+c$ but $a \nmid_M c$. It follows directly from the definitions that every Furstenberg monoid is nearly Furstenberg. However, the converse of this statement does not hold: actually, there are Puiseux monoids that are both nearly Furstenberg and almost Furstenberg but not Furstenberg. The following theorem, which generalizes Proposition~\ref{prop:AF not F}, sheds some light upon this observation.

\begin{theorem} \label{thm:AF-NF not F} 
	There are infinitely many non-isomorphic  Puiseux monoids that are both almost Furstenberg and nearly Furstenberg but not Furstenberg.
\end{theorem}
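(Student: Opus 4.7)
The plan is to show that the family $\{M_\ell\}_{\ell \in \nn}$ constructed in the proof of Proposition~\ref{prop:AF not F} already satisfies the stronger conclusion of this theorem. That proposition establishes for each $M_\ell$ that $M_\ell$ is almost Furstenberg, that $M_\ell$ is not Furstenberg, and that the $M_\ell$'s are pairwise non-isomorphic. The only remaining task is therefore to verify that each $M_\ell$ is also nearly Furstenberg, and I claim the element $c = 1$ is a uniform witness.

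Fix $\ell \in \nn$. Since $1 \in \nn_0[\frac{1}{2}] \subseteq M_\ell$ and the non-Furstenberg argument already given in the proof of Proposition~\ref{prop:AF not F} establishes that no atom of $M_\ell$ divides $1$, it suffices to exhibit, for each nonzero $b \in M_\ell$, an atom of $M_\ell$ that divides $b + 1$. I would split into two cases. If $b$ is a dyadic rational, then $b + 1$ is a dyadic rational strictly greater than $1$, so $b + 1 = r_j$ for some $j \in \nn$ by the enumeration of $\nn_0[\frac{1}{2}]_{>1}$; the equality $b + 1 = p_j \cdot \frac{r_j}{p_j}$ then shows that $\frac{r_j}{p_j}$ is an atom dividing $b + 1$ in $M_\ell$. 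If $b$ is not a dyadic rational, then $\mathsf{d}(b)$ has some odd prime factor $p$; since the only generators of $M_\ell$ whose denominators contain odd primes are the atoms $\frac{r_n}{p_n}$, any expression of $b$ as a sum of generators must include at least one atom of the form $\frac{r_j}{p_j}$ with $p_j = p$, and this atom divides $b$ and hence also $b + 1$.

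I do not foresee a substantive obstacle here: all of the technical content --- the explicit characterization of the atoms of $M_\ell$, the divisibility argument showing that $1$ has no atom divisor, and the pairwise non-isomorphism via the prime support part of Lemma~\ref{lem:prelim isomorphism} --- is already laid out in Proposition~\ref{prop:AF not F}. The only new content is the short case analysis above, which upgrades almost Furstenberg to simultaneously almost and nearly Furstenberg by exhibiting $c = 1$ as a single uniform witness of the nearly Furstenberg property.
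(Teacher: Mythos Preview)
The proposal is correct and follows essentially the same approach as the paper. Both proofs reuse the monoids $M_\ell$ from Proposition~\ref{prop:AF not F}, take $c = 1$ as the nearly Furstenberg witness (recalling from that proposition that $1$ is not divisible by any atom), and carry out the same two-case analysis: if $b$ is dyadic then $b+1$ is a dyadic rational greater than $1$, hence equal to some $r_j$ and divisible by the atom $\frac{r_j}{p_j}$; otherwise $b$ itself (and so $b+1$) is already divisible by an atom.
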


\begin{proof}
	Let the sequences $(P_\ell)_{\ell \ge 1}$ and $(M_\ell)_{\ell \ge 1}$ be as in the proof of Proposition~\ref{prop:AF not F}. In light of the same proof, it suffices to argue that $M_\ell$ is nearly Furstenberg for every $\ell \in \nn$. To do so, fix $\ell \in \nn$, and then recall that $(r_n)_{n \ge 1}$ is a sequence of rationals and $(p_n)_{n \ge 1}$ is a sequence of primes in $P_\ell$ such that $\nn_0\big[\frac{1}{2}\big]_{> 1} = \{ r_n \mid n \in \nn \}$ and $p_n > \mathsf{n}(r_n)$ for every $n \in \nn$. Also, recall that
	\[
		M_\ell := \bigg\langle \nn_0\Big[\frac12 \Big] \bigcup \bigg\{ \frac{r_n}{p_n} \ \Big{|} \ n \in \nn \bigg\} \bigg\rangle.
	\]
	To argue that $M_\ell$ is nearly Furstenberg, fix a nonzero $q \in M_\ell$, and let us check that $1+q$ is divisible by an atom in $M_\ell$ (we have already seen in the proof of Proposition~\ref{prop:AF not F} that~$1$ is not divisible by any atom in $M_\ell$). If $q$ is divisible by an atom in $M_\ell$, then $1 + q$ is also divisible by an atom in $M_\ell$. Otherwise, $q$ must be a dyadic rational. However, any dyadic rational greater than $1$ is a multiple of some atom, so $1+q$ must be divisible by an atom in $M_\ell$. Thus, the Puiseux monoid~$M_\ell$ is nearly Furstenberg, which completes our proof.
\end{proof}

\begin{remark}
	In the context of ring theory, an example of an integral domain that is both nearly Furstenberg and almost Furstenberg but not Furstenberg was constructed in~\cite[Example~15]{nLL19}.
\end{remark}

It follows directly from the definitions that every almost Furstenberg monoid is quasi-Furstenberg and also that every nearly Furstenberg monoid is quasi-Furstenberg. We conclude this subsection revisiting Example~\ref{ex:a quasi-Furstenberg PM not atomic}, and showing that the quasi-Furstenberg Puiseux monoid discussed in the same is neither almost Furstenberg nor nearly Furstenberg.

\begin{example}
	Consider the Puiseux monoid $M := \big\langle \frac12, \frac1{3^n} \mid n \in \nn_0 \big\rangle$. We have already seen in Example~\ref{ex:a quasi-Furstenberg PM not atomic} that $M$ is a quasi-Furstenberg non-atomic Puiseux monoid with $\mathcal{A}(M) = \big\{ \frac{1}{2} \big\}$. We claim that $M$ is neither almost Furstenberg nor nearly Furstenberg. Observe that each nonzero atomic element of $M$ must be divisible by $\frac12$ in $M$ because $\frac12$ is the only atom of $M$. This, along with the fact that $\frac13$ is not a Furstenberg element in $M$, ensures that $\frac13$ is not almost Furstenberg in $M$. Therefore~$M$ is not an almost Furstenberg Puiseux monoid.

	Now assume, towards a contradiction, that there exists $c \in M$ such that $\frac12 \nmid_M c$ but $\frac12 \mid_M b+c$ for all nonzero $b \in M$. Set $N := \big\langle \frac1{3^n} \mid n \in \nn_0 \big\rangle$, which is a submonoid of $M$. Observe that $c \in N$. Also, because $\frac12 \nmid_M c$, we see that $c < 1$. Now we can choose $b \in N$ sufficiently small such that $b + c \in N$ and $b+c < 1$. Then we find that $b+c$ is not divisible by $\frac12$ in $M$: this is because $b+c - \frac12 < \frac12 = \min \big(\frac12 + M\big)$ and $b+c - \frac12 \notin N$. This gives the needed contradiction. Hence we conclude that the Puiseux monoid $M$ is a quasi-Furstenberg monoid that is neither an almost Furstenberg nor a nearly Furstenberg monoid.
\end{example}

\medskip
\subsection{Almost Furstenberg VS Nearly Furstenberg}
In this subsection, we will show that there is no inclusion relation between the class of almost atomic Puiseux monoids and the class of nearly atomic Puiseux monoids. Let us start by showing that there are Puiseux monoids that is nearly Furstenberg but not almost Furstenberg.

\begin{theorem} \label{thm:NF not AF} 
	There are infinitely many non-isomorphic  Puiseux monoids that are nearly Furstenberg but not almost Furstenberg.
\end{theorem}

\begin{proof}
	For each $p \in \pp$ with $p \ge 7$, consider the Puiseux monoid
	\[
		M_p := \bigg\langle \Big\{\frac1p \Big\} \bigcup \nn_0\Big[\frac12 \Big]^\bullet \bigcup \bigg( \frac12 - \frac1p + \nn_0\Big[\frac12 \Big]^\bullet \bigg) \bigg\rangle.
	\]
	We claim that $\mathcal{A}(M_p) = \big\{ \frac1p \big\}$. Since $ \nn_0\big[\frac12 \big] \subseteq M_p$, it follows that none of the elements in $ \nn_0\big[\frac12 \big]$ is an atom of $M_p$. In addition, for any $\frac12 - \frac1p + \frac{b}{2^c}$, where $b \in \nn$ and $c \in \nn_0$, we can write
	\[
		\frac12 - \frac1p + \frac{b}{2^c} = \Big( \frac12 - \frac1p + \frac{b}{2^{c+1}} \Big) + \frac{b}{2^{c+1}},
	\]
	from which we can infer that $\frac12 - \frac1p + \frac{b}{2^c}$ is not an atom of $M_p$. We proceed to argue that $\frac1p$ is an atom of $M_p$. Write $\frac1p = q+r$ for some $q,r \in M_p$ and assume, without loss of generality, that $p \mid \mathsf{d}(q)$. Since $\inf\big( \frac12 - \frac1p + \nn_0\big[\frac12 \big]^\bullet \big) = \frac12 - \frac1p > \frac1p$, we see that $q,r \notin  \frac12 - \frac1p + \nn_0\big[\frac12 \big]^\bullet$. This, along with the fact that $p \mid \mathsf{d}(q)$, ensures that $q \in \nn_0 \frac1p$. Therefore $q \in \big\{0,\frac1p \big\}$, and so $\frac1p$ is an atom. Thus, $\mathcal{A}(M_p) = \big\{ \frac1p \big\}$.
	
	Since $|\mathcal{A}(M_p)| = 1$, it follows from Proposition~\ref{prop:almost Furstenberg conditions} that $M_p$ is not almost Furstenberg. In order to argue that $M_p$ is nearly Furstenberg, we need to prove first the following claim.
	\smallskip
	
	\noindent \textit{Claim:} $\frac1p \nmid_{M_p} \frac12$.
	\smallskip
	
	\noindent \textit{Proof of Claim:} This is equivalent to showing that $\frac12 - \frac1p$ does not belong to $M_p$. Suppose, towards a contradiction, that this is not the case. Hence $\frac12 - \frac1p$ can be written as a nonnegative integer linear combination of defining generators. Since $\frac12 - \frac1p $ is strictly less than any element in the set $\frac12 - \frac1p + \nn_0\big[ \frac12 \big]^\bullet$, we see that $\frac12 - \frac1p \in \big\langle \big\{\frac1p \big\} \bigcup \nn_0\big[ \frac12 \big] \big\rangle$. Then we can write $\frac12 = c_0 \frac1p + \frac{b}{2^c}$ for some $c_0 \in \nn$ and $b,c \in \nn_0$. As the $p$-adic valuation of $\frac12$ is~$0$, it follows that $p \mid c_0$, which implies that the right-hand side of $\frac12 = c_0 \frac1p + \frac{b}{2^c}$ is at least $1$, a contradiction. Thus, the claim is established.
	\smallskip
	
	We are now in a position to prove that $M_p$ is nearly Furstenberg. It suffices to show that $\frac1p \mid_{M_p} \frac12 + m$ for each of the defining generators $m \in \big\{\frac1p \big\} \bigcup \nn_0\big[\frac12 \big]^\bullet \bigcup \big( \frac12 - \frac1p + \nn_0\big[\frac12 \big]^\bullet \big)$. It is clear that $\frac1p \mid_{M_p} \frac12 + \frac1p$. In addition, for each $m \in \nn_0\big[\frac12\big]^\bullet$, the fact that $\frac12 - \frac1p + m \in \frac12 - \frac1p +  \nn_0\big[\frac12\big]^\bullet \subseteq M_p$ ensures that $\frac1p \mid_{M_p} \frac12 + m$. Lastly, suppose that $m = \frac12 - \frac1p + \frac{b}{2^c}$ for some $b, c \in \nn$. In this case, we see that
	\[
		m + \frac12 - \frac1p = \Big( \frac12 - \frac1p + \frac{b}{2^{c+1}}\Big) +  \Big( \frac12 - \frac1p + \frac{b}{2^{c+1}}\Big) \in M_p,
	\]
	which implies that $\frac1p \mid_{M_p} \frac12 + m$. Hence $M_p$ is a nearly Furstenberg Puiseux monoid that is not almost Furstenberg.
	
	Finally, suppose, towards a contradiction, that $p$ and $q$ are two distinct primes in $\pp_{\ge 7}$ such that the Puiseux monoids $M_p$ and $M_q$ are isomorphic. Since isomorphism between Puiseux monoids are given by rational multiplication we can take $b,c \in \nn$ with $\gcd(b,c) = 1$ such that $c M_p = b M_q$. Since $\frac1p \in M_p$, it follows that $\frac cp \in b M_q$, which implies that $p \mid c$ because the $p$-adic valuation of any element of $M_q$ is nonnegative. Now since $1 \in M_q$, we can take $r \in M_p$ such that $c r = b$. Since $\gcd(b,c) = 1$, no prime distinct from $2$ and $p$ can divide $c$ (as it would also divide $b$), and so $c = 2^s p$ for some $s \in \nn_0$. Thus, $M_q = \frac cb M_p = 2^{s-t}\frac pq M_p$ and, after taking $N \in \nn$ large enough so that $2^{N+s-t} > p$, we obtain an element $r := \frac p{2^{N+s-t}q} \in M_q$ such that $r < \frac 1q$ and $q \mid \mathsf{d}(r)$, which is a contradiction (see the definition of $M_q$). Hence if $p$ and $q$ are distinct primes in $\pp_{\ge 7}$, then the Puiseux monoids $M_p$ and $M_q$ are not isomorphic.
\end{proof}

Let us turn now to show that there are almost Furstenberg Puiseux monoids that are not nearly Furstenberg.

\begin{theorem}
	There are infinitely many non-isomorphic Puiseux monoids that are almost Furstenberg but not nearly Furstenberg.
\end{theorem}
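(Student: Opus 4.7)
The plan is to construct each $M_\ell$ as an infinite ``direct-sum-like'' assembly of Puiseux monoids modeled on those from Proposition~\ref{prop:AF not F} and Theorem~\ref{thm:AF-NF not F} (each of which is already both almost Furstenberg and nearly Furstenberg), with the building blocks placed on pairwise disjoint sets of primes. The key intuition is that disjoint prime supports allow block-level AF arguments to transfer cleanly upstairs, but they also preclude any single element $c \in M_\ell$ from ``universally covering'' all blocks at once (since $c$ has finite support in only finitely many blocks), which breaks nearly Furstenbergness.

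Concretely, for each $\ell, i \in \nn$, pick a prime $q_{\ell,i}$ and an infinite sequence of primes $(p_{\ell,i,n})_{n \in \nn}$ so that the full collection $\{q_{\ell,i}\}_{\ell,i} \cup \{p_{\ell,i,n}\}_{\ell,i,n}$ consists of pairwise distinct primes. Enumerate $\nn_0[1/q_{\ell,i}]_{> 1}$ as $(r_{\ell,i,n})_{n \in \nn}$ with $p_{\ell,i,n} > \mathsf{n}(r_{\ell,i,n})$, and define
\[
M_{\ell,i} := \bigg\langle \nn_0\Big[\frac{1}{q_{\ell,i}}\Big] \cup \bigg\{\frac{r_{\ell,i,n}}{p_{\ell,i,n}} : n \in \nn\bigg\}\bigg\rangle.
\]
By verbatim adaptation of the proofs of Proposition~\ref{prop:AF not F} and Theorem~\ref{thm:AF-NF not F} (with $q_{\ell,i}$ playing the role of the base prime $2$), each $M_{\ell,i}$ is simultaneously AF and NF, with $\mathcal{A}(M_{\ell,i}) = \{r_{\ell,i,n}/p_{\ell,i,n} : n \in \nn\}$; moreover, $1/q_{\ell,i}$ is a non-atom-divisible element of $M_{\ell,i}$, which one verifies by a $p_{\ell,i,n}$-adic valuation argument paralleling the ``Claim'' in Theorem~\ref{thm:NF not AF}. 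Finally let $M_\ell$ be the Puiseux submonoid of $\qq_{\geq 0}$ generated by $\bigcup_{i \in \nn} M_{\ell,i}$.

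Because the $M_{\ell,i}$'s have pairwise disjoint prime supports, each $x \in M_\ell$ admits a unique decomposition $x = \sum_i x_i$ with $x_i \in M_{\ell,i}$ and only finitely many nonzero; consequently $\mathcal{A}(M_\ell) = \bigsqcup_i \mathcal{A}(M_{\ell,i})$, since any split of an atom in block $i$ must remain inside $M_{\ell,i}$. For almost Furstenbergness, given a non-invertible $b = \sum_i b_i$: if some $b_{i_0}$ is atom-divisible in $M_{\ell,i_0}$, take $c = 0$; otherwise fix any $i_0$ with $b_{i_0} \neq 0$ and apply the AF property of $M_{\ell,i_0}$ to obtain an atomic $c_{i_0} \in M_{\ell,i_0}$ and an atom $a \in \mathcal{A}(M_{\ell,i_0})$ with $a \mid b_{i_0} + c_{i_0}$ and $a \nmid c_{i_0}$ in $M_{\ell,i_0}$; then setting $c := c_{i_0}$, uniqueness of decomposition yields $a \mid_{M_\ell} b + c$ and $a \nmid_{M_\ell} c$. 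For failure of nearly Furstenbergness, given any candidate $c = \sum_i c_i \in M_\ell$, pick $j \in \nn$ with $c_j = 0$ and set $b := 1/q_{\ell,j}$: for every atom $a \in \mathcal{A}(M_\ell)$ with $a \nmid c$, if $a \in \mathcal{A}(M_{\ell,j})$ then $a \mid b + c$ would force $b - a \in M_{\ell,j}$, contradicting non-atom-divisibility of $b$ in $M_{\ell,j}$, while if $a \in \mathcal{A}(M_{\ell,k})$ with $k \neq j$ then $a \mid b + c$ forces $a \mid c_k$ in $M_{\ell,k}$ and so $a \mid c$, contradicting $a \nmid c$.

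Pairwise non-isomorphism of $(M_\ell)_{\ell \in \nn}$ follows from Lemma~\ref{lem:prelim isomorphism}(1), since the prime supports of distinct $M_\ell$'s are disjoint infinite subsets of $\pp$. The main obstacle is verifying the unique-decomposition and atom-identification structure of the ``disjoint-sum'' monoid $M_\ell$; these rest on the careful prime-disjointness of the building blocks, which is arranged at the outset. Once these structural facts are in hand, both the AF argument and the failure-of-NF argument reduce cleanly to block-level reasoning already handled by Proposition~\ref{prop:AF not F} and Theorem~\ref{thm:AF-NF not F}.
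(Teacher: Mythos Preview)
Your construction has a genuine gap, and in fact the monoid $M_\ell$ you build is nearly Furstenberg, so it cannot serve as the desired example.

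The immediate problem is the ``unique decomposition'' claim: every block $M_{\ell,i}$ contains $\nn_0$, so, for example, the element $2$ can be placed in any block you like, and $x=\sum_i x_i$ is never uniquely determined. This is not merely a technical oversight. It breaks your NF-failure argument outright: given any candidate $c\in M_\ell$, you choose $j$ with ``$c_j=0$'' and set $b=1/q_{\ell,j}$, then claim that an atom $a$ from block $k\neq j$ dividing $b+c$ must divide $c_k$ and hence~$c$. But the integer part of $c$ can be moved into block~$j$; concretely, if $c=2$ and $j$ is any block, then $b+c=2+1/q_{\ell,j}\in\nn_0[1/q_{\ell,j}]_{>1}$ equals some $r_{\ell,j,n}$, whence the block-$j$ atom $a=r_{\ell,j,n}/p_{\ell,j,n}$ divides $b+c$, while the usual $p_{\ell,j,n}$-adic argument shows $a\nmid 2=c$.

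Worse, $M_\ell$ is in fact nearly Furstenberg with witness $c=1$. First, $1$ is not divisible in $M_\ell$ by any atom: if $a=r/p$ were an atom with $a\mid_{M_\ell}1$, take the largest $m$ with $ma\mid_{M_\ell}1$; since $a$ is the unique generator with negative $p$-adic valuation, $1-ma$ has a representation avoiding $a$, so $v_p(1-ma)\ge 0$, forcing $p\mid m$ and hence $ma\ge r>1$, a contradiction. Now let $b\in M_\ell^\bullet$. If $b$ is divisible by an atom we are done. Otherwise $b=\sum_i d_i$ with each $d_i\in\nn_0[1/q_{\ell,i}]$ and some $d_{i_0}>0$; then $1+d_{i_0}\in\nn_0[1/q_{\ell,i_0}]_{>1}$ equals some $r_{\ell,i_0,n}$, so the atom $r_{\ell,i_0,n}/p_{\ell,i_0,n}$ divides $1+d_{i_0}$ and hence $1+b$. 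Thus every nonzero $b$ has an atom dividing $1+b$ but not $1$, which is exactly the NF condition. (This is essentially Theorem~\ref{thm:AF-NF not F} playing out in each block simultaneously, with the shared element $1$ serving as a universal witness---precisely because the blocks overlap.)

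The paper's construction is entirely different: it uses a single dyadic base $\nn_0[1/2]$ together with atoms $a_n=\frac{1}{p_n}\big(1-\frac{1}{2^n}\big)$ whose $p_n$-multiples $1-\frac1{2^n}$ form a strictly increasing sequence accumulating at~$1$. The point is that for any proposed witness $c$, one can rewrite $c$ with a bounded ``dyadic part'' $d_c$ and then choose $b=\frac{1}{2^i}$ so small that the interval $(d_c,\,d_c+\frac{1}{2^i}]$ avoids every term $1-\frac{1}{2^n}$; a valuation argument then shows no new atom can divide $c+b$. There is no modular ``direct-sum'' structure here---the obstruction to NF comes from the delicate interaction between the dyadic tail and the sequence $(1-1/2^n)_n$ inside a single block.
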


\begin{proof}
	For each $\ell \in \nn$, we let $P_\ell$ be an infinite set of odd primes such that the sets $P_k$ and $P_\ell$ are disjoint for all $k, \ell \in \nn$ with $k \neq \ell$. For each $\ell \in \nn$, we will construct a Puiseux monoid $M_\ell$ whose prime support is an infinite subset of $\{2\} \cup P_\ell$ such that $M_\ell$ is almost Furstenberg but not nearly Furstenberg. Since for any distinct $k, \ell \in \nn$ the symmetric difference of the prime supports of $M_k$ and $M_\ell$ is an infinite set, it will follow from part~(1) of Lemma~\ref{lem:prelim isomorphism} that the Puiseux monoids $M_k$ and $M_\ell$ will not be isomorphic.
	
	Fix $\ell \in \nn$, and let $(p_n)_{n \ge 1}$ be a sequence whose terms are pairwise distinct primes in $P_\ell$ such that $p_n \nmid 2^n-1$ for every $n \in \nn$.  Now consider the Puiseux monoid
	\[
		M_\ell := \bigg\langle \frac{1}{2^n},\frac{1}{p_n}\Big( 1 - \frac{1}{2^n} \Big) \ \Big{|} \ n \in \nn \bigg\rangle.
	\]
	For each $n \in \nn$, observe that the only defining generator of $M_\ell$ having negative $p_n$-adic valuation is $\frac{1}{p_n}(1-\frac{1}{2^n})$. Therefore $\frac{1}{p_n}(1-\frac{1}{2^n}) \in \mathcal{A}(M_\ell)$ for every $n \in \nn$. This, along with the fact that the rest of the defining generators (namely, the elements of $\big\{ \frac{1}{2^n} \mid n \in \nn\}$) are not atoms, ensures that
	\[
		\mathcal{A}(M_\ell) := \bigg\{ \frac{1}{p_n}\Big( 1 - \frac{1}{2^n} \Big) \ \Big{|} \ n \in \nn \bigg\}.
	\]
	Set $a_n := \frac{1}{p_n}(1-\frac{1}{2^n})$. We proceed to argue that $M_\ell$ is almost Furstenberg. To do so, we first prove that the non-Furstenberg elements of $M_\ell$ are the elements in $\zz\big[ \frac12 \big] \cap \big(0,\frac12 \big)$. First, note that $\nn_0\big[ \frac12 \big] \subseteq M_\ell$. Observe that if an element of $M_\ell$ is not Furstenberg, then it must be generated by the set $\big\{ \frac{1}{2^n} \mid n \in \nn \big\}$ and so it must be contained in $\nn_0\big[ \frac12 \big]$. We can see, on the other hand, that every element in $\nn_0\big[ \frac12 \big]_{\ge 1/2}$ is divisible by $\frac12$ in $M_\ell$, and so the fact that $\frac12$ is a Furstenberg element of $M_\ell$ (indeed, $\frac12 = p_1 a_1$) implies that every element in $\nn_0\big[ \frac12 \big]_{\ge 1/2}$ is a Furstenberg element. Hence every non-Furstenberg element of~$M_\ell$ belongs to $\zz\big[ \frac12 \big] \cap \big(0,\frac12 \big)$. Conversely, suppose that $q \in M_\ell$ is a Furstenberg element, and write
	\begin{equation} \label{eq:aux}
		q = \frac{c_0}{2^k} + \sum_{i=1}^N c_i a_i
	\end{equation}
	for some $N \in \nn$ and $k, c_0, \dots, c_N \in \nn_0$ such that $\sum_{i=1}^N c_i \ge 1$. Take $j \in \ldb 1,N \rdb$ such that $c_j \ge 1$. In this case, after applying $p_j$-adic valuation to both sides of the equality~\eqref{eq:aux}, we find that $p_j \mid c_j (2^j - 1)$ and so $p_j \mid c_j$. As a result, $q \ge c_j a_j\ge 1 - \frac{1}{2^j} \ge \frac12$. Hence no Furstenberg element of $M_\ell$ can be contained in $\zz\big[ \frac12 \big] \cap \big(0,\frac12 \big)$. Hence we conclude that set of non-Furstenberg elements of $M_\ell$ is $\zz\big[ \frac12 \big] \cap \big(0,\frac12 \big)$.
	
	Since each non-Furstenberg element is divisible by $\frac1{2^n}$ for some $n \in \nn$, arguing that $M_\ell$ is an almost Furstenberg element amounts to showing that $\frac1{2^n}$ is an almost Furstenberg element for every $n \in \nn$. Fix $n \in \nn$, and let us show that $\frac{1}{2^n}$ is an almost Furstenberg element. Observe that $c := 1-\frac{1}{2^n}$ is atomic because it is the sum of $p_n$ copies of the atom $a_n$. In addition, the atom $a_{n+1}$ divides the element $c + \frac{1}{2^n} = 1$ in $M_\ell$: indeed, $1 = p_{n+1} a_{n+1} + \frac{1}{2^{n+1}}$. Therefore we only need to verify that $a_{n+1} \nmid_{M_\ell} c$. It is enough to observe that if $c = \frac{c_0}{2^k} + \sum_{i=1}^N c_i a_i$ for some index $N \ge n+1$ and $k,c_0, \dots, c_N$, then after applying $p_{n+1}$-adic valuation to both sides of this equality we will find that $p_{n+1} \mid c_{n+1}$, and so the inequality $c < p_{n+1} a_{n+1}$ will ensure that $c_{n+1} = 0$. Hence $a_{n+1} \nmid_{M_\ell} c$. As a result, $M_\ell$ is almost Furstenberg.
	
	Finally, we will argue that $M_\ell$ is not nearly Furstenberg. Fix an arbitrary nonzero $c \in M_\ell$, and then write
	\begin{equation} \label{eq:temp}
		c = d_c + \sum_{i=1}^N c_i a_i
	\end{equation} 
	for some $d_c \in \nn_0\big[ \frac12\big]$ and $c_1, \dots, c_N \in \nn_0$. We can further assume that $c_k < p_k$ for every $k \in \ldb 1,N \rdb$ (otherwise, while $c_k \ge p_k$ for some $k \in \ldb 1, N \rdb$, we could just take away $p_k$ of these atoms, add their sum to $d_c$). Now take $i \in \nn$ such that none of the terms of the sequence $\big( 1 - \frac1{2^n}\big)_{n \ge 1}$ belongs to the interval $\big(d_c, d_c + \frac1{2^i} \big]$ (this is possible because the sequence $\big( 1 - \frac1{2^n}\big)_{n \ge 1}$ is strictly increasing). We will argue that $c + \frac{1}{2^i}$ is not divisible in $M_\ell$ by any atom that does not divide $c$. To do so, take an atom $a_j$ such that $a_j \nmid_{M_\ell} c$ and suppose, by way of contradiction, that $a_j \mid_{M_\ell} c + \frac1{2^i}$. Let $m$ be the largest positive integer such that $m a_j \mid_{M_\ell} c+ \frac1{2^i}$. Since $a_j \nmid_{M_\ell} c$, it follows that $v_{p_j}(c)\ge 0$, which implies that $v_{p_j}\big( c + \frac1{2^i}\big) \ge 0$. Therefore $p_j \mid m$, and so from the fact that $m a_j \mid_{M_\ell} c+ \frac1{2^i}$ we infer that $1 - \frac1{2^j} \mid_{M_\ell} c + \frac1{2^i}$. On the other hand, the fact that $a_j \nmid_{M_\ell} c$ ensures that $1 - \frac1{2^j} \nmid_{M_\ell} c$. Since $1 - \frac1{2^j} \nmid_{M_\ell} c$, the inequality $d_c < 1-\frac1{2^j}$. As none of the terms of the sequence $\big( 1 - \frac1{2^n}\big)_{n \ge 1}$ belongs to the interval $\big(d_c, d_c + \frac1{2^i} \big]$, it follows that $d_c + \frac{1}{2^i} < 1 - \frac1{2^j}$. Since $1 - \frac1{2^j} \mid_{M_\ell} c + \frac1{2^i}$, we can write $ c + \frac1{2^i}$ as in the second equality of the expression
	\begin{equation} \label{eq:temp1}
		\big(d_c + \frac1{2^i} \big) + \sum_{i=1}^N c_i a_i = c + \frac1{2^i} = d' + \sum_{i=1}^{N'} c'_i a_i,
	\end{equation}
	where $d' \in \nn_0\big[ \frac12 \big]$ and $c'_1, \dots, c'_{N'} \in \nn_0$ satisfy that $d' \ge 1 - \frac1{2^j}$ and $c'_k < p_k$ for every $k \in \ldb 1, N' \rdb$ (after inserting zero coefficients if necessary, we can assume that $N' \ge N$). After applying $p_i$-adic valuation to~\eqref{eq:temp1} for every $i \in \ldb 1, N' \rdb$, we find that $c'_i = c_i$ for every $i \in \ldb 1, N \rdb$ and also that $c'_i = 0$ for every $i \in \ldb N+1, N' \rdb$. Hence $d' = d_c + \frac1{2^i} < 1 - \frac1{2^j}$, which is a contradiction. Hence $a_j \nmid_{M_\ell} c + \frac1{2^i}$. Since every atom that does not divide $c$ in $M_\ell$ cannot divide $c + \frac1{2^i}$ in $M_\ell$, the fact that $c$ was arbitrarily chosen guarantees that $M_\ell$ is not nearly Furstenberg. Thus, we conclude that the Puiseux monoid $M_\ell$ is almost Furstenberg but not nearly Furstenberg.
\end{proof}

\bigskip
\section{Two Final Examples in Connection with Atomicity}
\label{sec:atomicity}

Let $M$ be a monoid. We say that an element $b \in M$ is \emph{almost atomic} if there exists an atomic element $c \in M$ such that $b+c$ is atomic. Observe that every invertible element is almost atomic. The monoid $M$ is called \emph{almost atomic} if every element of $M$ is almost atomic. Also, the monoid $M$ is called \emph{nearly atomic} if there exists $c \in M$ such that for each $b \in M \setminus \uu(M)$, the element $b+c$ is atomic. We can mimic the proof of \cite[Lemma~7]{nLL19} to obtain that every nearly atomic monoid is almost atomic. The notion of almost atomicity was introduced by Boynton and Coykendall~\cite{BC15}, while the notion of near atomicity was introduced by Lebowitz-Lockard~\cite{nLL19}.

The property of being Furstenberg and that of being nearly atomic are both weaker than the property of being atomic. It is natural to wonder whether one of the two former properties implies the other one in the class of Puiseux monoids. As the next two examples show, none of these properties implies the other one. In the following example we exhibit a Furstenberg monoid that is not even almost atomic (in particular, it is not nearly atomic).

\begin{example} \label{Furstenberg PM not AA}
	First we construct a Puiseux monoid that is Furstenberg but not nearly atomic (we have already seen in Example~\ref{ex:Furstenberg not QA} a Furstenberg monoid that is not even quasi-atomic; however, the corresponding monoid is not a Puiseux monoid). Consider the Puiseux monoid
	\[
		M := \Big\langle \frac1p \ \Big{|} \ p \in \pp_{\ge 3} \Big\rangle \bigcup \qq_{\ge 1}. 
	\]
	Observe that none of the elements in $\qq_{\ge 1}$ can be an atom of $M$ because they are each divisible by $\frac1p$ in $M$ for sufficiently large odd primes~$p$. On the other hand, $\frac1p \in \mathcal{A}(M)$ for every $p \in \pp_{\ge 3}$ because $\frac1p$ is the smallest element whose $p$-adic valuation is negative and, therefore, it cannot be written as the sum of smaller elements. Thus,
	\[
		\mathcal{A}(M) = \bigg\{ \frac1p \ \Big{|} \ p \in \pp_{\ge 3} \bigg\}.
	\]
	Therefore each nonzero element of $M$ is divisible by an atom in $M$, which implies that $M$ is a Furstenberg monoid. On the other hand, observe that $M$ is almost atomic if and only if every element of $M$ can be written as the difference of two atomic elements. In addition, note that differences of atomic elements will have nonnegative $2$-adic valuations as atoms have have nonnegative $2$-adic valuations. As a result, we infer that $\frac32$ is not a difference of atomic elements and, therefore, $M$ is not almost atomic. We conclude that the Puiseux monoid $M$ is Furstenberg but not almost atomic (and so not nearly atomic).
\end{example}

We proceed to construct a nearly atomic Puiseux monoid that is not Furstenberg.

\begin{example}
	For each $x \in \nn$, we let $\ell_2(x)$ denote the largest power of $2$ less than $x$. Let $(o_n)_{n \ge 1}$ denote the strictly increasing sequence whose terms are the odd positive integers greater than $1$, and let $(p_n)_{n \ge 1}$ denote the strictly increasing sequence whose terms are the primes greater than $3$. Notice that $o_i < p_i$ for every $i \in \nn$ as prime numbers greater than $3$ are a subset of the odd numbers. Now consider the Puiseux monoid
	\[
		M := \bigg\langle \frac13, \frac{1}{2^n}, \frac{o_n}{\ell_2(o_n)p_n} \ \Big{|} \ n \in \nn \bigg\rangle.
	\]
	Let us argue first that $\mathcal{A}(M) = \big\{ \frac13, \frac{o_n}{\ell_2(o_n)p_n} \mid n \in \nn \big\}$. For all $i \in \nn$, the inequality $o_i < p_i$ ensures that the defining generator $\frac{o_n}{\ell_2(o_n)p_n}$ has a prime factor $p$ in the denominator that is not shared by any other defining generator: therefore $\frac{o_n}{\ell_2(o_n)p_n}$ is the only defining generator with negative $p$-adic valuation, and so it must belong to $\mathcal{A}(M)$. The same argument can be invoked to show that $\frac 13 \in \mathcal{A}(M)$. On the other hand, for each $n \in \nn_0$, the fact that $\frac1{2^n} = \frac1{2^{n+1}} + \frac1{2^{n+1}}$ guarantees that $\frac{1}{2^n} \notin \mathcal{A}(M)$. As a result,
	\[
		\mathcal{A}(M) = \bigg\{ \frac13, \frac{o_n}{\ell_2(o_n)p_n} \ \Big{|} \ n \in \nn \bigg\}.
	\]
		
	Let us prove now that $M$ is nearly atomic. To do so, first observe that $1$ is an atomic element: indeed, $1$ is the sum of three copies of the atom $\frac13$. Thus, proving that $M$ is nearly atomic amounts to showing that $1 + \frac{x}{2^y}$ is atomic for all $x \in \nn$ and $y \in \nn_0$ (as adding any of the other defining generators will preserve the atomic condition). Fix $x \in \nn$ and $y \in \nn_0$, and note that $1 + \frac{x}{2^y}$ can be written as $\frac{a}{2^b}$ for some $a,b \in \nn_0$ such that $a>2^b$ and $a \in 2 \nn_0 + 1$. Suppose that $a$ is the $n$-th term of the sequence $(o_n)_{n \ge 1}$, and consider the defining generator $\frac{a}{\ell_2(a)p_n}$, which is an atom of $M$. It follows from the definition of $\ell_2(a)$ that $2^b \mid \ell_2(a)$ and, therefore, $\frac{a}{2^b}$ is a multiple of the atom $\frac{a}{\ell_2(a)p_n}$. Thus,~$M$ is nearly atomic.

	Finally, we verify that $M$ is not a Furstenberg monoid. It suffices to show that $\frac12$ is not divisible by any atom in $M$. Suppose, for the sake of a contradiction, that $a_0 \mid_M \frac12$ for some $a_0 \in \mathcal{A}(M)$. Let $p_0 \in \pp$ be the odd prime dividing $\mathsf{d}(a_0)$. Since $a_0 \mid_M \frac12$, we can write $\frac12 = \frac{b}{2^c} + \sum_{i=0}^N c_i a_i$ for distinct atoms $a_0, \dots, a_N$ and coefficients $c_0, \dots, c_N \in \nn_0$ such that $c_0 \neq 0$. Since the atoms $a_0, \dots, a_N$ are pairwise distinct, after applying $p_0$-adic valuation to both sides of the equality $\frac12 = \frac{b}{2^c} + \sum_{i=0}^N c_i a_i$, we find that $p_0 \mid c_0$. Hence $c_0 a_0 \ge p_0 a_0 \ge 1 > \frac12$, which is a contradiction. Hence we conclude that $M$ is a nearly atomic (and so a nearly Furstenberg) Puiseux monoid that is not Furstenberg.
\end{example}

\bigskip
\section*{Acknowledgments}

We would like to thank our mentor Dr. Felix Gotti for guiding us in both our research and the writing process of this paper. We would also like to thank the MIT PRIMES program for the research opportunity.

\bigskip

\end{document}